\newtheorem{theorem}{Theorem}[section]
\newtheorem{lemma}[theorem]{Lemma}
\theoremstyle{definition}
\newtheorem {definition}[theorem]{Definition}
\theoremstyle{remark}
\newtheorem*{remark}{Remark}
\def\be{\begin{equation}}
\def\ee{\end{equation}}
\def\ba{\begin{eqnarray*}}
\def\ea{\end{eqnarray*}}
\def\bae{\begin{eqnarray}}
\def\eae{\end{eqnarray}}
\def\bc{\begin{center}}
\def\ec{\end{center}}
\begin{document}
\title{A note on
eigenvalues  of random block Toeplitz matrices with slowly growing bandwidth}

\date{March 1, 2011}
\author{Yi-Ting Li, Dang-Zheng Liu, Xin Sun and
Zheng-Dong Wang\\
School of Mathematical Sciences\\
Peking University\\
Beijing, 100871, P. R. China }

\maketitle

\begin{abstract}
This paper can be thought of as a remark of \cite{llw}, where the authors
studied the eigenvalue distribution $\mu_{X_N}$ of random block
  Toeplitz band matrices with given block order $m$.  In this note we will give explicit density functions of $\lim\limits_{N\to\infty}\mu_{X_N}$ when the
bandwidth grows slowly. In fact, these densities are exactly the
normalized one-point correlation functions of $m\times m$ Gaussian
unitary ensemble (GUE for short). The series
$\{\lim\limits_{N\to\infty}\mu_{X_N}|m\in\mathbb{N}\}$ can be seen
as a transition from the standard normal distribution to semicircle
distribution. We also show a similar relationship between GOE and
block Toeplitz band matrices with symmetric blocks.

\textbf{Keywords:} block Toeplitz matrix, GUE, GOE, limit spectral
distribution
\end{abstract}

\section{Introduction}

A block Toeplitz matrix is a block matrix which can be written as
\[
T_N=(A_{i-j})_{i,j=1}^N=
\begin{array}{ccc}
\left( \begin{array}{cccccc}
A_{0}&A_{-1}&A_{-2}&\cdots&A_{-(N-1)}\\
A_{1}&A_{0}&A_{-1}&\cdots&A_{-(N-2)}\\
A_{2}&A_{1}&A_{0}&\cdots&A_{-(N-3)}\\
\vdots&\vdots&\vdots&\ddots &\vdots\\
A_{N-1}&A_{N-2}&A_{N-3}&\cdots&A_{0}
\end{array}\right)
\end{array}
\]
where $\{A_{-(N-1)},...,A_0,...,A_{N-1}\}$ are $m\times m$ matrices and
$\,A_s=(a_{ij}(s))_{i,j=1}^m$. If the $a_{ij}(s)$'s are
random variables, then we call $T_N$ a random block Toeplitz matrix.
We suppose the $a_{ij}(s)$'s are real random variables and $A_s=(A_{-s})^T$. Besides, we list some assumptions as follows.\\
Independence of the elements:\\
(1) $a_{i_1j_1}(s_1)$ and $a_{i_2j_2}(s_2)$ are independent if
$|s_1|\ne|s_2|$, \\
(2) If $s\ne0$ and $(i_1,j_1)\ne(i_2,j_2)$ then $a_{i_1j_1}(s)$ and
$a_{i_2j_2}(s)$ are independent, \\
(3) If $(i_1,j_1)\ne(i_2,j_2)$ and $(i_1,j_1)\ne(j_2,i_2)$ then
$a_{i_1j_1}(0)$ and $a_{i_2j_2}(0)$ are independent.\\
Uniform boundness condition:\\
(4) $\mathbb{E}[a_{ij}(s)]=0$, $\mathbb{E}[|a_{ij}(s)|^2]=1$, $-(N-1)\le s\le N-1$,
$1\le i,j\le m$\\
and
\[
\sup\limits_{N\in\mathbb{N}\atop-(N-1)\le s\le
N-1}\{\mathbb{E}[|a_{ij}(s)|^k]|1\le i,j\le m\}=C_{k,m}<+\infty
\]
and
\[
\sup\limits_{m\in\mathbb{N}}C_{k,m}=C_k<+\infty.
\]
Slowly growing bandwidth condition:\\
(5)The block Toeplitz matrix is a band block matrix
with bandwidth $b_N$, that is,  $A_s=0$ for $|s|>b_N$. Moreover, $b_N$
satisfies: $\lim\limits_{N\to\infty}b_N=\infty$ and $b_N=o(N)$.

Gaussian unitary ensemble (GUE for short) $(\mathcal{H}_{m},d \mu)$
is the space $\mathcal{H}_{m}$ of all Hermitian $(m \times
m)$-matrices with a certain Gaussian measure $d \mu$ (see
\cite{agz,m}). We will regularly use the notations
$\langle\cdot\rangle_{\textmd{GUE}}$ and
$\langle\cdot\rangle_{\textmd{TBM}}$, respectively denoting the
expectations under GUE and random Toeplitz band matrices. If $H =
(h_{ij})$ is a matrix from GUE, then it is easy to see $\langle
h_{ij}h_{ji}\rangle_{\textmd{GUE}}=1$ while all other second moments
are equal to zero: $\langle h_{ij}h_{kl}\rangle_{\textmd{GUE}}=0$,
whenever $(i,j)\neq(l,k)$.

If $A=(a_{ij}(\omega))_{i,j=1}^N$ is a real symmetric or complex
Hermitian random matrix and its entries are random variables on a
probability space $\Omega$ with a probability measure $P$, then the
eigenvalue distribution of $A$ is
\[
\mu_{_A}=\frac{1}{N}\int_{\Omega}\sum\limits_{j=1}^N\delta_{\lambda_j(\omega)}dP(\omega)
\]
where $\lambda_j(\omega)$'s are the $N$ real eigenvalues of $A$. We
have

\begin{theorem}\label{thm1}
Let $T_N$ be an $mN\times mN$ random block Toeplitz matrix as above. Set $X_N=\frac{T_N}{\sqrt {2mb_N}}$, 
then $\mu_{X_N}$   converges weakly to $f_m(x)d x$
  as $N\to\infty$. Moreover, if the bandwidth satisfies
  $\sum_{N=1}^{\infty} b_{N}^{-2} <\infty,$
  then the convergence is almost sure. 
Here
$f_m(x)=\frac{1}{\sqrt{m}}\sum\limits_{j=0}^{m-1}\psi_j^2(\sqrt{m}x)$
where $\psi_j$ is the $j$th nomarlized oscillator wave-function:
$\psi_j(x)=\frac{e^{-\frac{x^2}{4}}H_j(x)}{\sqrt{\sqrt{2\pi}j!}}$
and $H_j$ is the $j$th Hermite polynomial:$ H_j(x)=(-1)^j
e^{\frac{x^2}{2}}\frac{d^j}{dx^j}e^{-\frac{x^2}{2}}.$
\end{theorem}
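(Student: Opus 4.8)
The plan is to use the moment method, which is standard for establishing limiting spectral distributions of random matrices. I would show that the empirical moments $\frac{1}{mN}\langle\operatorname{Tr}(X_N^{2k})\rangle_{\textmd{TBM}}$ converge as $N\to\infty$ to the moments $\int x^{2k} f_m(x)\,dx$ of the claimed density, and that these moments determine the distribution uniquely. Since $f_m$ is built from the normalized one-point correlation function of the $m\times m$ GUE, the target moments are precisely the normalized expected traces $\frac{1}{m}\langle\operatorname{Tr}(H^{2k})\rangle_{\textmd{GUE}}$ for an $m\times m$ GUE matrix $H$, after the scaling by $\sqrt{m}$ implicit in $f_m(x)=\frac{1}{\sqrt m}\sum_{j=0}^{m-1}\psi_j^2(\sqrt m x)$. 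Thus the heart of the argument is to match the Toeplitz trace expansion term-by-term with the Wick/Gaussian expansion governing the GUE.

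The key steps, in order, are as follows. First I would expand $\langle\operatorname{Tr}(T_N^{2k})\rangle_{\textmd{TBM}}$ as a sum over closed paths, writing each entry of $T_N$ through its block structure so that a single scalar factor is $a_{i_p j_p}(s_p)$ with $s_p$ the block-index increment along the path; the normalization $X_N=T_N/\sqrt{2mb_N}$ fixes the correct power of $b_N$. Second, using the independence assumptions (1)–(3) together with the moment bounds (4), I would argue that in the limit only pair-partitions of the $2k$ edges survive, and within each pair the two scalar entries must be forced to be equal (or transposes, via $A_s=A_{-s}^T$), exactly as in the Gaussian/Wick calculus. Third, I would show that the slowly growing bandwidth condition (5), $b_N\to\infty$ and $b_N=o(N)$, causes the contributions in which the paired increments $s_p$ satisfy any nontrivial linear relation beyond $s_{p}=\pm s_{q}$ to be negligible: the number of free block-indices and band-parameters, weighted by the volume factor $b_N^{-k}$ and the length factor $N^{-1}$, vanishes unless the partition is ``non-crossing-free'' in the Toeplitz sense. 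What remains is a sum over pairings of the internal $m$-indices that reproduces precisely the GUE Wick contractions $\langle h_{ij}h_{ji}\rangle_{\textmd{GUE}}=1$, so the surviving combinatorics collapses onto $\frac{1}{m}\langle\operatorname{Tr}(H^{2k})\rangle_{\textmd{GUE}}$.

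Having matched all moments, I would invoke the fact that the GUE moment sequence grows like $(Ck)^{k}$ and hence satisfies Carleman's condition, so the limiting distribution is the unique measure with those moments, namely $f_m(x)\,dx$; this gives weak convergence of $\mu_{X_N}$ to $f_m(x)\,dx$. For the almost sure statement under $\sum_N b_N^{-2}<\infty$, I would upgrade convergence in expectation to almost sure convergence by a standard variance estimate: bounding $\operatorname{Var}\bigl(\frac{1}{mN}\operatorname{Tr}(X_N^{2k})\bigr)$ by a quantity of order $b_N^{-2}$ (the paired-path count for the variance forces an extra band constraint, yielding the gain of one factor of $b_N^{-1}$ in each of two independent traces), and then applying Borel--Cantelli together with the summability hypothesis along the full sequence.

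I expect the main obstacle to be the third step: carefully classifying which closed-path pairings contribute in the $b_N\to\infty$, $b_N=o(N)$ regime and proving that all ``crossing'' or otherwise constrained Toeplitz terms are suppressed by the bandwidth volume factor. Controlling the interplay between the Toeplitz band constraints on the increments $s_p$ and the block-index pairings, so that exactly the GUE contractions survive with the right multiplicity, is the delicate combinatorial core; the independence conditions (2)–(3) and the symmetry $A_s=A_{-s}^T$ must be tracked precisely to avoid over- or under-counting the diagonal-block ($s=0$) contributions.
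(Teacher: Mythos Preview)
Your approach is essentially the same as the paper's: moment method matching the block Toeplitz trace expansion to the GUE Wick sum $\sum_{\pi\in\mathcal P_2(2k)} m^{g(\pi)-k-1}$, uniqueness via Carleman, and a variance/Borel--Cantelli argument for the almost sure upgrade. The only difference is packaging: the paper outsources the combinatorial moment computation to Theorem~4.3 of \cite{llw} and the almost sure statement to \cite{bb}, whereas you propose to carry out those steps in place.
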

\begin{remark}$f_m(x)$ is the one-point
correlation function of GUE up to the scaling \cite{m}. As $f_1$ is
the density function of the standard normal distribution and when
$m\to\infty$ $f_m$ converges in law to the semicircle distribution
(see \cite{llw}),
$\{\lim\limits_{N\to\infty}\mu_{X_N}|m\in\mathbb{N}\}$ can be seen
as a transition from $N(0,1)$ to semicircle distribution.
\end{remark}

In \cite{kkm}, Kologlu, Kopp and Miller got a very similar result.
They proved that the limiting eigenvalue density of symmetric block
circulant Toeplitz ensemble is the same as the eigenvalue density of
GUE (Theorem 1.4(1) of \cite{kkm}). But there is an essential
difference between their model and our model: In \cite{kkm} the
block Toeplitz matrix has a circulant structure while in this paper
the block Toeplitz matrix has a band structure. In the viewpoint of
combinatorics, both of the two structures are strong. So they are
two different models induced from the block Toeplitz matrix model
and interestingly have the same limiting eigenvalue distribution.
There is also another difference: In \cite{kkm} the entries of the
block Toeplitz matrix are i.i.d random variables but in our model
the entries only have to satisfy the independent condition.

\begin{theorem} \label{tracepolynomial}Let $\nu_{1},\ldots,\nu_{r}$ be nonnegetive integers, and $\nu=\nu_{1}+\cdots+ \nu_{r}$.
 Set $Y_N=\frac{T_N}{\sqrt {2b_N}}$, then
\begin{equation}\label{traceproduct}\lim\limits_{N\to\infty}\frac{1}{N^{\nu}}\langle\prod_{i=1}^r(\mathrm{tr}Y_N^{i})^{\nu_i}\rangle_{\mathrm{TBM}}
=\langle\prod_{i=1}^r(\mathrm{tr}H^{i})^{\nu_i}\rangle_{\mathrm{GUE}}.\end{equation}

\end{theorem}






\section{Proof of Main Results}

\begin{definition}
Let $[n]=\{1,2,...,n\}$, $\forall n\in\mathbb{N}$. \\
(1) We call $\pi =\{\{a_1,b_1\},...,\{a_k,b_k\}\}$ a pair partition
of $[2k]$ if $\bigcup\limits_{j=1}^r\{a_j,b_j\}=[2k]$ and
$\{a_i,b_i\}\bigcap \{a_j,b_j\}=\emptyset$ if
$i\ne j$. For convenience, we assume that $a_1<\cdots<a_k$ and $a_i<b_i$ for each pair, under which such a pair partition is called a Wick coupling. For $\pi$, we define $\pi(a_j)=b_j$ and $\pi(b_j)=a_j$ $(1\le j\le k)$. We denote by $\mathcal{P}_2(2k)$   the set of pair partitions of $[2k]$.\\
(2) Suppose $\pi\in\mathcal{P}_2(2k)$. Then $\pi$ can be seen as a
permutation: $(a_1,b_1) \cdots (a_k,b_k)$.
Consider the canonical cycle $\gamma_0=(1,2,...,2k-1,2k)$. Let
$g(\pi)$  denote the number of orbits of the permutation
$\gamma_0\circ\pi$.
\end{definition}
The following lemma is a well-known consequence of Wick's formula on
the moments of GUE. One can get it with the method of moment
generating function (see Section 3.3.1 of \cite{agz}).
\begin{lemma}\label{momentofGUE}
Suppose $H=(b_{ij})_{i,j=1}^m$ is an $m\times m$ random Hermitian
matrix from GUE. Set $Y=\frac{H}{\sqrt{m}}$. Then the odd moments of
$\mu_Y$ are all 0 and the even moments of $\mu_Y$ are
$m_{2k}(\mu_Y)=\sum\limits_{\pi\in\mathcal{P}_2(2k)}m^{g(\pi)-k-1}.$
\end{lemma}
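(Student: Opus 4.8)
The plan is to compute the moments of $\mu_Y$ directly through Wick's formula together with a standard cycle-counting argument. Since $\mu_Y$ is the eigenvalue distribution of $Y=H/\sqrt{m}$, its $p$-th moment is $m_p(\mu_Y)=\frac{1}{m}\langle\mathrm{tr}\,Y^p\rangle_{\mathrm{GUE}}=m^{-p/2-1}\langle\mathrm{tr}\,H^p\rangle_{\mathrm{GUE}}$. Expanding the trace over matrix indices (with the cyclic convention $i_{p+1}=i_1$) gives
\[
\langle\mathrm{tr}\,H^p\rangle_{\mathrm{GUE}}=\sum_{i_1,\ldots,i_p=1}^m\langle b_{i_1i_2}b_{i_2i_3}\cdots b_{i_pi_1}\rangle_{\mathrm{GUE}}.
\]
First I would dispose of the odd case: when $p$ is odd each summand is the expectation of an odd number of centered jointly Gaussian variables, which vanishes, so every odd moment of $\mu_Y$ is $0$.

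For the even case $p=2k$ I would invoke Wick's formula, expressing each Gaussian expectation as a sum over pair partitions $\pi\in\mathcal{P}_2(2k)$ of the product of pairwise covariances. Using the GUE covariance $\langle b_{ij}b_{kl}\rangle_{\mathrm{GUE}}=\delta_{il}\delta_{jk}$ recorded before the lemma, the pair $\{a,b\}\in\pi$ (coupling the factors $b_{i_ai_{a+1}}$ and $b_{i_bi_{b+1}}$) contributes the constraint $\delta_{i_ai_{b+1}}\delta_{i_{a+1}i_b}$. Hence
\[
\langle\mathrm{tr}\,H^{2k}\rangle_{\mathrm{GUE}}=\sum_{\pi\in\mathcal{P}_2(2k)}\#\{(i_1,\ldots,i_{2k})\in[m]^{2k}:\text{all }\delta\text{-constraints of }\pi\text{ hold}\},
\]
so the whole problem reduces to counting, for each fixed $\pi$, the number of admissible index tuples.

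The heart of the argument, and the step I expect to be the main obstacle, is to show that this count equals $m^{g(\pi)}$. The idea is to read the Kronecker constraints as identifications among the index labels. Writing $\gamma_0=(1,2,\ldots,2k)$ for the canonical cycle, so that $\gamma_0(s)=s+1$ cyclically, and viewing a tuple as a function $i\colon[2k]\to[m]$, the constraints attached to the pair $\{a,b\}=\{a,\pi(a)\}$ are exactly $i(\gamma_0(a))=i(\pi(a))$ and $i(\gamma_0(b))=i(\pi(b))$. Thus the full system of constraints imposed by $\pi$ is equivalent to $i(\gamma_0(s))=i(\pi(s))$ for every $s$; substituting $s\mapsto\pi(s)$ and using $\pi^2=\mathrm{id}$ turns this into $i\big((\gamma_0\circ\pi)(s)\big)=i(s)$ for all $s$. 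Therefore an admissible $i$ is precisely a function constant on each orbit of the permutation $\gamma_0\circ\pi$, and such functions correspond bijectively to arbitrary assignments of a value in $[m]$ to each orbit, giving $m^{g(\pi)}$ of them by the definition of $g(\pi)$.

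Assembling the pieces yields $\langle\mathrm{tr}\,H^{2k}\rangle_{\mathrm{GUE}}=\sum_{\pi\in\mathcal{P}_2(2k)}m^{g(\pi)}$, whence $m_{2k}(\mu_Y)=m^{-k-1}\sum_{\pi}m^{g(\pi)}=\sum_{\pi\in\mathcal{P}_2(2k)}m^{g(\pi)-k-1}$, as claimed. The only genuinely delicate point is the bookkeeping that collapses the two Kronecker deltas of each pair into the single relation $i\circ(\gamma_0\circ\pi)=i$; once the permutation $\gamma_0\circ\pi$ is correctly identified, the orbit count is immediate. Alternatively, as the authors note, one can bypass the combinatorics and extract the moments from the GUE moment generating function as in Section 3.3.1 of \cite{agz}.
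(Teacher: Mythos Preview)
Your proof is correct and follows precisely the route the paper signals: the lemma is stated there as ``a well-known consequence of Wick's formula'' with a pointer to \cite{agz}, and no details are given. You have supplied exactly those details---the Wick expansion of $\langle\mathrm{tr}\,H^{2k}\rangle_{\mathrm{GUE}}$ followed by the orbit-counting identification with $\gamma_0\circ\pi$---so your argument is the fleshed-out version of what the paper only alludes to.
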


\begin{proof}[Proof of Theorem \ref{thm1}]
From Theorem 4.3 of \cite{llw} and Lemma \ref{momentofGUE}, we know
that $\mu_Y$ and $\gamma_T^m$ have the same moments. It follows
from Carleman's Theorem (see \cite{f}) that $\gamma_T^m$ should be
$\mu_Y$. So the density function of $\gamma_T^m$ is the one-point
correlation function of GUE, which is a well-known function
\cite{agz,m}:
\[
f_m(x)=\frac{1}{\sqrt{m}}\sum\limits_{j=0}^{m-1}\psi_j^2(\sqrt{m}x).
\]
From \cite{bb} we know if $\sum_{N=1}^{\infty} b_{N}^{-2} <\infty$,
then the convergence is almost sure.
\end{proof}

\begin{proof}[Proof of Theorem \ref{tracepolynomial}]
By Lemma 2.2 of \cite{llw}, the main contribution in the expansion
of $
\frac{1}{N^{\nu}}\langle\prod_{i=1}^r(\mathrm{tr}Y_N^{i})^{\nu_i}\rangle_{\mathrm{TBM}}$
comes from the pair partitions. As in the proof of Theorem 4.3 in
\cite{llw}, we can show that
\begin{eqnarray*}
 \lim\limits_{N\to\infty}\frac{1}{N^{\nu}}\langle\prod_{i=1}^r(\mathrm{tr}Y_N^{i})^{\nu_i}\rangle_{\mathrm{TBM}}=
\begin{cases}
0&\text{if }\sum\limits_{i=1}^r i \nu_i\text{ is odd}\\
\sum\limits_{\pi\in\mathcal{P}_2(\sum\limits_{i=1}^r i \nu_i)}m^{F(\pi)}&\text{if
}\sum\limits_{i=1}^r  i \nu_i\text{ is even}
\end{cases}.
\end{eqnarray*}
Here the definition of $F(\pi)$ as follows:
Suppose that $\sum\limits_{i=1}^ri \nu_i=2 \tilde{\nu}$ is even and $\pi=\{\{a_1,b_1\},\ldots,\{a_{\tilde{\nu}},b_{\tilde{\nu} }\} \}\in\mathcal{P}_2(2\tilde{\nu})$,
then $F(\pi)$ denotes the number   of ``free indices" of the system
$\begin{cases}
t_{a_i}=t_{f(b_1)}\\
t_{b_i}=t_{f(a_i)}
\end{cases}
(1\le i\le\tilde{\nu}),\text{ where } f \text{ is defined as below:
}$
Set $\nu_0=0$. For  $1\le x\le
\sum\limits_{i=1}^r i\nu_i$, if $\exists\ 0\le s\le r-1$ and $1\le
a\le \nu_{s+1}$ such that $x=\sum\limits_{i=0}^s i\nu_i+(s+1) a$,
then $f(x)=\sum\limits_{i=0}^s i\nu_i +(s+1)(a-1)+1$; otherwise
$f(x)=x+1.$

By Wick's formula, we can compute the integral
$\langle\prod_{i=1}^r(\mathrm{tr}H^{i})^{\nu_i}\rangle_{\mathrm{GUE}}$
and complete the proof.

\end{proof}


\section{Similar Results for GOE and Block Toeplitz Matrix with Symmetric Blocks}

First, we remark that we can get the same results associated with  GUE if one of the following conditions are imposed:\\
 1) each
 block of $T_N$ is a complex matrix and $A_{-s} =(\overline{A_{s}})^{T}$;\\
 2) each block of $T_N$ is a Hermitian matrix, that is,
$A_{-s} =A_{s}=(\overline{A_{s}})^{T}$.

 However, the situation becomes different when each
 block of $T_N=(A_{i-j})_{i,j=1}^N$ is a symmetric matrix, that is, $A_{-s} =A_{s}=( A_{s} )^{T}$. Besides, we modify the second moments of each block as follows:
$\mathbb{E}[|a_{ij}(s)|^2]=
\begin{cases}1 &\text{ if } i\ne j\\
2 &\text{ if } i=j\end{cases},\forall s.$


We still use the notations $\langle\cdot\rangle_{\textmd{GOE}}$ and
$\langle\cdot\rangle_{\textmd{TBM}}$, respectively denoting the
expectations under GOE and Toeplitz band matrices with symmetric
blocks.

\begin{theorem}\label{GOE}
Let $T_N$ be an $mN\times mN$ random block Toeplitz matrix as
mentioned above. Set $X_N=\frac{T_N}{\sqrt {2mb_N}}$, then
\begin{equation} \lim\limits_{N\to\infty}\frac{1}{mN}\langle  \mathrm{tr}X_N^{k}  \rangle_{\mathrm{TBM}}
=\frac{1}{m}\langle \mathrm{tr}(H/\sqrt{m})^{k} \rangle_{\mathrm{GOE}}.\end{equation}
Moreover,  $\mu_{X_N}$   converges weakly to $g_m(x)d x$
  as $N\to\infty$. And if the bandwidth satisfies
  $\sum_{N=1}^{\infty} b_{N}^{-2} <\infty,$
  then the convergence is almost sure.
  Here
\begin{equation}\label{goedensity}
g_m(x)=\frac{1}{\sqrt{m}}\sum\limits_{j=0}^{m-1}\psi_j^2(\sqrt{m}x)+(\frac{m}{2})^{1/2}\psi_{m-1}(\sqrt{m}x)\int_{-\infty}^{\infty}
\varepsilon(x-t)\psi_m(\sqrt{m}t)d t+\alpha_{m}(x),
\end{equation}
$\varepsilon(x)=\frac{1}{2}\mathrm{sign}(x)$  and  $\psi_j$ is the $j$th nomarlized oscillator wave-function as in Theorem \ref{thm1}, while
\begin{eqnarray*}
\alpha_{m}(x)=
\begin{cases}\frac{1}{m}\psi_{2s}(\sqrt{m}x)\div
\int_{-\infty}^{\infty} \psi_{2s}(\sqrt{m}t)d t&\text{if }m=2s+1\\
0&\text{if }m=2s\end{cases}.
\end{eqnarray*}

\end{theorem}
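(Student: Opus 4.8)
The plan is to follow the same route as the proof of Theorem~\ref{thm1}, replacing the GUE moment computation by its GOE counterpart. Since $X_N=T_N/\sqrt{2mb_N}$ is an $mN\times mN$ matrix, the quantity $\frac{1}{mN}\langle\mathrm{tr}X_N^{k}\rangle_{\mathrm{TBM}}$ is exactly the expected $k$th moment of the empirical measure $\mu_{X_N}$, and $\frac{1}{m}\langle\mathrm{tr}(H/\sqrt m)^{k}\rangle_{\mathrm{GOE}}$ is the $k$th moment of the normalized spectral measure of an $m\times m$ GOE matrix, whose density is the one-point correlation function of GOE. Thus the whole theorem reduces to (i) establishing the moment identity, (ii) identifying the target moments with $g_m$, and (iii) upgrading moment convergence to weak and almost sure convergence.

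For (i) I would expand $\langle\mathrm{tr}T_N^{k}\rangle_{\mathrm{TBM}}$ as a sum over closed index loops $i_1\to i_2\to\cdots\to i_k\to i_1$ carrying Toeplitz labels $s_1,\dots,s_k$, exactly as in the proof of Theorem~4.3 of \cite{llw}, and take $X_N=Y_N/\sqrt m$ with $Y_N=T_N/\sqrt{2b_N}$ so that the $1/\sqrt{2b_N}$ and $1/\sqrt m$ scalings align the two sides. By Lemma~2.2 of \cite{llw} the leading contribution comes from pair partitions $\pi\in\mathcal P_2(k)$ of the $k$ factors, so odd $k$ gives $0$. The only structural change from the Hermitian case is the symmetry of the blocks, $a_{ij}(s)=a_{ji}(s)$ together with $A_{-s}=A_s$: now a factor $a_{ij}(s)$ can be Wick-paired with $a_{i'j'}(s')$ whenever $|s|=|s'|$ and either $(i,j)=(i',j')$ or $(i,j)=(j',i')$. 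Both orientations contribute with weight $1$ off the diagonal, while the modified convention $\mathbb{E}[|a_{ii}(s)|^2]=2$ supplies the diagonal term. This is precisely the GOE second-moment rule $\langle h_{ij}h_{kl}\rangle_{\mathrm{GOE}}=\delta_{ik}\delta_{jl}+\delta_{il}\delta_{jk}$, so the surviving sum takes the form $\sum_{\pi\in\mathcal P_2(k)}m^{\,F'(\pi)}$, where $F'(\pi)$ counts the free block-indices of the linear system attached to $\pi$, now allowing each pair to be read in either orientation.

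For (ii) I would compute $\langle\mathrm{tr}H^{k}\rangle_{\mathrm{GOE}}$ directly by Wick's formula: each pairing of the $k$ matrix entries contributes $\prod(\delta_{ik}\delta_{jl}+\delta_{il}\delta_{jk})$, and collapsing the resulting index identifications yields exactly the same sum $\sum_{\pi\in\mathcal P_2(k)}m^{F'(\pi)}$, which proves the moment identity. The limiting moments are therefore the moments of the $m\times m$ GOE spectral measure, whose density is the classical one-point correlation function of GOE; the explicit expression \eqref{goedensity}, including the $\varepsilon(x)=\tfrac12\mathrm{sign}(x)$ convolution term and the parity-dependent correction $\alpha_m$, is the formula recorded in \cite{m}. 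Since $g_m$ has sub-Gaussian tails its moment sequence satisfies Carleman's condition, so (as in Theorem~\ref{thm1}) the moment convergence forces weak convergence $\mu_{X_N}\Rightarrow g_m(x)\,dx$, and the variance estimate of \cite{bb} gives almost sure convergence once $\sum_N b_N^{-2}<\infty$.

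The main obstacle is the combinatorial step (i). Unlike the Hermitian case, where each Wick pair has a single admissible orientation and the exponent is governed by the orientable count $g(\pi)$, the symmetric blocks force both orientations, so the exponent $F'(\pi)$ mixes orientable and non-orientable gluings. One must check carefully that, under the band and slowly-growing-bandwidth hypotheses, every such gluing still contributes at the same order in $N$ and $b_N$, with no extra loss or gain from the reversed orientation, and that the diagonal variance-$2$ convention matches the $i=j$ terms in the GOE Wick rule exactly. Verifying that this bookkeeping reproduces the GOE side term by term, rather than only up to lower-order corrections, is the delicate part; the remaining analytic steps are routine once the identity $\sum_{\pi}m^{F'(\pi)}=\frac{1}{m}\langle\mathrm{tr}(H/\sqrt m)^{k}\rangle_{\mathrm{GOE}}$ is in hand.
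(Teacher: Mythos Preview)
Your approach coincides with the paper's: reduce to pair partitions via Lemma~2.2 of \cite{llw}, match the block--index sum with the GOE Wick expansion, identify the limit density as the GOE one--point function from \cite{m}, and invoke Carleman and \cite{bb} for weak and almost sure convergence.

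The one place where the paper is more precise than your sketch is the form of the combinatorial sum. Writing it as $\sum_{\pi}m^{F'(\pi)}$ with a single exponent per pair partition cannot be right: already for $k=2$ the GOE side is $m^{-2}(m^{2}+m)=1+1/m$, not a pure power of $m$. The paper resolves exactly the bookkeeping you flag as the ``main obstacle'' by fixing $\pi$ and summing over all index tuples $\mathbf t\in A(\pi)$ (those satisfying, for each pair, \emph{either} of the two orientation constraints) with weight $2^{r(\pi,\mathbf t)}$, where $r(\pi,\mathbf t)$ counts the pairs for which all four indices coincide. Equivalently, one sums over $\pi$ \emph{and} over the $2^{k}$ orientation choices, each choice contributing its own free--index count; the diagonal variance $2$ on the TBM side then matches the double contribution $\delta_{ik}\delta_{jl}+\delta_{il}\delta_{jk}$ on the GOE side term by term, confirming that both sides equal $m^{-k-1}\sum_{\pi}\sum_{\mathbf t\in A(\pi)}2^{r(\pi,\mathbf t)}$.
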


\begin{proof}[Proof of Theorem \ref{GOE}]
For $H=(h_{ij})_{i,j=1}^m$, let $Y=\frac{H}{\sqrt{m}}$. From Wick's
formula we know $\frac{1}{m}\langle \mathrm{tr}(H/\sqrt{m})^{k}
\rangle_{\mathrm{GOE}}=0$ when $k$ is odd and $\frac{1}{m}\langle
\mathrm{tr}(H/\sqrt{m})^{2k}\rangle_{\mathrm{GOE}}$ is
\begin{eqnarray*}
m^{-k-1}\sum\limits_{t_1,...,t_{2k}=1}^m\sum \big(\langle
h_{t_{a_1}t_{a_1+1}}h_{t_{b_1}t_{b_1+1}}\rangle_{\mathrm{GOE}}
\cdots \langle
h_{t_{a_k}t_{a_k+1}}h_{t_{b_k}t_{b_k+1}}\rangle_{\mathrm{GOE}}\big)
\end{eqnarray*}
where the second sum is taken over all
$\pi=\{\{a_1,b_1\},...,\{a_k,b_k\}\}\in\mathcal{P}_2(2k)$ and
$t_{2k+1}:=t_1$. $H$ is symmetric, so $ \langle
h_{t_{a_1}t_{a_1+1}}h_{t_{b_1}t_{b_1+1}}\rangle_{\mathrm{GOE}}
\cdots \langle
h_{t_{a_k}t_{a_k+1}}h_{t_{b_k}t_{b_k+1}}\rangle_{\mathrm{GOE}}\ne 0$
implies $\begin{cases}t_{a_i}=t_{b_i+1}\\ t_{b_i}=t_{a_i+1}
\end{cases}
\text{or}
\begin{cases}t_{a_i}=t_{b_i}\\ t_{a_i+1}=t_{b_i+1}
\end{cases}
\mathrm{for  } \  1\le i \le k$, moreover, each term
\begin{eqnarray*}
\langle
h_{t_{a_i}t_{a_i+1}}h_{t_{b_i}t_{b_i+1}}\rangle_{\mathrm{GUE}}=
\begin{cases}2&\text{if }t_{a_i}=t_{b_i}=t_{a_i+1}=t_{b_i+1}\\
1&\text{otherwise}
\end{cases}
\end{eqnarray*}
For a given
$\pi=\{\{a_1,b_1\},...,\{a_k,b_k\}\}\in\mathcal{P}_2(2k)$, set
\begin{eqnarray*}
A(\pi)=\Big\{(t_1,...,t_{2k})\in[m]^{2k}\Big|\begin{cases}t_{a_i}=t_{b_i+1}\\
t_{b_i}=t_{a_i+1}
\end{cases}
\text{or}
\begin{cases}t_{a_i}=t_{b_i}\\ t_{a_i+1}=t_{b_i+1}
\end{cases}\mathrm{for  } \  1\le i \le k\Big\}.
\end{eqnarray*}
For $\textbf t=(t_1,...,t_{2k})\in A(\pi)$, let $r(\pi,\textbf
t)=\sharp\{i\in[k]|t_{a_i}=t_{b_i}=t_{a_i+1}=t_{b_i+1}\},$ then
$\frac{1}{m}\langle
\mathrm{tr}(H/\sqrt{m})^{2k}\rangle_{\mathrm{GOE}}=m^{-k-1}\sum\limits_{\pi\in\mathcal{P}_2(2k)}\sum\limits_{\textbf
t\in A(\pi)}2^{r(\pi,\textbf t)}.$

Similarly as in the proof of Theorem 3.2 and Theorem 4.3 of
\cite{llw}, we know when $k$ is odd $\frac{1}{mN}\langle
\mathrm{tr}X_N^{k} \rangle_{\mathrm{TBM}}=o(1)$ and
$\frac{1}{mN}\langle  \mathrm{tr}X_N^{2k} \rangle_{\mathrm{TBM}}$ is
\begin{eqnarray*}
&
&\sum\limits_{i=1}^N\sum\limits_{j_1,...,j_{2k}=-b_N}^{b_N}\sum\limits_{t_1,...,t_{2k}=1}^m\frac{E(a_{t_1t_2}(j_1)\cdots
a_{t_{2k}t_1}(j_{2k}))}{(2mb_N)^k\cdot mN}\prod_{l=1}^{2k}
I_{[1,N]}(i+\sum\limits_{q=1}^l
j_q)\delta_{0,\sum\limits_{q=1}^{2k}j_q}\\
&=&\sum\limits_{i=1}^N\sum\limits_{\pi\in\mathcal{P}_2(2k)}\sum\limits_{\textbf
t\in A(\pi)}\frac{2^{r(\pi,\textbf t)}}{{(2mb_N)^k\cdot
mN}}\sum\limits_{x_1,...,x_k=-b_N} ^{b_N}\prod_{l=1}^{2k}
I_{[1,N]}(i+\sum_{q=1}^l \epsilon_{\pi}(q)x_{\pi(q)})+o(1)\\
&\to&\sum\limits_{\pi\in\mathcal{P}_2(2k)}\sum\limits_{\textbf t\in
A(\pi)}\frac{2^{r(\pi,\textbf t)}}{m^{k+1}}\,(N\to\infty).
\end{eqnarray*}
Thus $ \lim\limits_{N\to\infty}\frac{1}{mN}\langle
\mathrm{tr}X_N^{k} \rangle_{\mathrm{TBM}} =\frac{1}{m}\langle
\mathrm{tr}(H/\sqrt{m})^{k} \rangle_{\mathrm{GOE}}.$ From \cite{bb}
we know the convergence is almost sure if $\sum_{N=1}^{\infty}
b_{N}^{-2}<\infty$. Finally, $g_m(x)$ is the one-point correlation
function of GOE with order $m$, thus we complete the proof.

\end{proof}

\begin{remark}The density (\ref{goedensity}) follows from the 1-point correlation
function of GOE (see (7.2.32) of \cite{m}). This family densities
can also be seen as a transition from the  normal distribution
$N(0,2)$ to the semicircle distribution with variance 1. In the
situation of GOE, we also have parallel results to Theorem
\ref{tracepolynomial}.
\end{remark}

\section*{Acknowledgements}
The authors thank the anonymous referee for telling us that
\cite{kkm} has a similar result to Theorem \ref{thm1}.

\end{document}